\theoremstyle{plain}
\newtheorem{lemma}{Lemma}[]
\newtheorem{theorem}[lemma]{Theorem}
\newtheoremstyle{dotless}{}{}{\itshape}{}{\bfseries}{}{ }{}
\theoremstyle{dotless}
\newtheorem*{void*}{}
\theoremstyle{definition}
\theoremstyle{remark}
\newcommand{\R}{\mathbb{R}}
\author{Antonio Córdoba, Jesús Ocáriz \\
Universidad Autónoma de Madrid (UAM), Instituto de Ciencias Matemáticas (ICMAT)} 
\title{A note on generalized laplacians and minimal surfaces}
\begin{document}

\maketitle
\begin{abstract}
In these notes we give an interdisciplinary result which links the geometric concept of minimal surfaces with generalized harmonic functions.
\end{abstract} 

\section*{Introduction}

Let $f$ be a locally integrable function defined on an open domain of the euclidean space $\R^n$, its generalized laplacian $\widehat{\Delta}f$ is given by the following limit (provided it exists):
$$
\widehat{\Delta}f(x):=\lim_{r\to 0^+} \frac{2(n+2)}{r^2}\fint_{B^{(n)}_r(x)} \left( f(y)-f(x) \right) \mathrm{d}y.
$$
Here the symbol $\fint_{B^n_r(x)}$ denotes the average on the ball $B^{(n)}_r(x)\subset \R^n$ centered at $x$ and with radius $r$; $\mathrm{d}y = \mathrm{d}\mu_n(y)$ is the Lebegue measure in $\R^n$ and $\mu_n(A)=|A|=\text{vol}(A)$ is the measure of the measurable set $A$.

\vspace{0.5cm}

It is well known (\cite{rado2013subharmonic}, \cite{riesz1926fonctions}) that a continuous generalized harmonic function (i.e. $\widehat{\Delta}f(x)=0$ for every $x$) must be smooth and, therefore, harmonic in the ordinary sense: $\Delta f=0$.

\vspace{0.5cm}

There are, however, discontinuous functions which are harmonic in the generalized sense. An example is given by:
$$
f(x', x_n)=\begin{cases} 
      \alpha_+ & \text{if } x_n > 0 \\
      \frac{1}{2}(\alpha_+ + \alpha_-), & \text{if } x_n = 0 \\
      \alpha_- & \text{if } x_n < 0
   \end{cases}
$$
which is clearly discontinuous when $\alpha_+\neq \alpha_-$ and, nevertheless, satisfies $\widehat{\Delta}f(x)=0$ everywhere.

\vspace{0.5cm}

Suppose that $S$ is a $C^1$-hypersurface separating the domain $\Omega$ in two non empty components: $\Omega = \Omega^+\cup \Omega^- \cup S$, 
$$
S=\partial \Omega^+ \cap \Omega = \partial \Omega^- \cap \Omega.
$$

\vspace{0.5cm}

Let the function $f_S$ be equal to $\alpha_+$ inside $\Omega^+$, $\alpha_-$ inside $\Omega^-$ and $\frac{1}{2}(\alpha_+ + \alpha_-)$ in $S$. 

\vspace{0.5cm}

The main purpose of this note is to give a proof of the following:
\begin{theorem}
The function $f_S$ (with $\alpha_+\neq \alpha_-$) is generalized harmonic if and only if $S$ is minimal.
\end{theorem}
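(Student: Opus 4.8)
The plan is to localize. Off $S$ the function $f_S$ is locally constant, so $\widehat{\Delta}f_S\equiv0$ there trivially, and the statement reduces to computing $\widehat{\Delta}f_S(x_0)$ at a point $x_0\in S$; that, in turn, reduces to a second-order asymptotic analysis of the volumes $S$ cuts out of small balls centred at $x_0$.

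\textbf{From averages to cut volumes.} Fix $x_0\in S$, write $V^{\pm}_r=|B^{(n)}_r(x_0)\cap\Omega^{\pm}|$ and $\omega_n=|B^{(n)}_1|$, so that $V^{+}_r+V^{-}_r=\omega_n r^{n}$ since $|S|=0$. Using $f_S(x_0)=\tfrac12(\alpha_{+}+\alpha_{-})$, a one-line computation gives
$$\fint_{B^{(n)}_r(x_0)}\bigl(f_S(y)-f_S(x_0)\bigr)\,\mathrm{d}y=\frac{\alpha_{+}-\alpha_{-}}{2\,\omega_n r^{n}}\bigl(V^{+}_r-V^{-}_r\bigr),\qquad\text{so}\qquad\widehat{\Delta}f_S(x_0)=\frac{(n+2)(\alpha_{+}-\alpha_{-})}{\omega_n}\lim_{r\to0^{+}}\frac{V^{+}_r-V^{-}_r}{r^{\,n+2}}.$$
Since $\alpha_{+}\neq\alpha_{-}$, the theorem is equivalent to saying that for every $x_0\in S$ this last limit exists and equals $0$; everything now reduces to the asymptotics of $V^{+}_r-V^{-}_r$.

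\textbf{Local graph and the mean curvature.} Choose coordinates centred at $x_0$ with $T_{x_0}S=\{y_n=0\}$, so that near the origin $S=\{y_n=\varphi(y')\}$ with $\varphi(0)=0$, $\nabla\varphi(0)=0$ and $\Omega^{+}=\{y_n>\varphi(y')\}$ locally. Slicing the ball by the hyperplanes $\{y'=\mathrm{const}\}$ gives the \emph{exact} identity
$$V^{+}_r-V^{-}_r=-2\int_{\{|y'|<r\}}g_r(y')\,\mathrm{d}y',\qquad g_r(y')=\max\!\bigl(-\sqrt{r^{2}-|y'|^{2}},\,\min(\varphi(y'),\sqrt{r^{2}-|y'|^{2}})\bigr),$$
with $g_r$ equal to $\varphi$ truncated to the vertical extent of the ball. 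When $S$ is $C^{2}$ near $x_0$ one has $|\varphi(y')|\le C|y'|^{2}$, so the truncation only affects an equatorial layer $\{|y'|>r-O(r^{3})\}$ whose contribution is $O(r^{\,n+3})$, and the Taylor expansion $\varphi(y')=\tfrac12\,D^{2}\varphi(0)[y',y']+o(|y'|^{2})$ together with $\int_{\{|y'|<r\}}y_iy_j\,\mathrm{d}y'=c_n r^{\,n+1}\delta_{ij}$ (a positive dimensional constant $c_n$) yield
$$V^{+}_r-V^{-}_r=-c_n\operatorname{tr}\bigl(D^{2}\varphi(0)\bigr)r^{\,n+1}+o(r^{\,n+1})=-c_n\,H_S(x_0)\,r^{\,n+1}+o(r^{\,n+1}),$$
because at a point where $\nabla\varphi=0$ one has $\operatorname{tr}(D^{2}\varphi(0))=\operatorname{div}\bigl(\nabla\varphi/\sqrt{1+|\nabla\varphi|^{2}}\bigr)(0)$, which up to the usual normalisation is the mean curvature $H_S(x_0)$ of $S$ at $x_0$.

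\textbf{The dichotomy, and where the work is.} Combining the two displays,
$$\widehat{\Delta}f_S(x_0)=-\frac{c_n(n+2)(\alpha_{+}-\alpha_{-})}{\omega_n}\lim_{r\to0^{+}}\frac{H_S(x_0)+o(1)}{r}.$$
If $H_S(x_0)\neq0$ this limit is $\pm\infty$, so $\widehat{\Delta}f_S$ does not exist at $x_0$ and $f_S$ is not generalized harmonic. If $H_S(x_0)=0$ the leading term drops; pushing the expansion of $\int_{\{|y'|<r\}}\varphi$ one step further, the cubic term integrates to $0$ by the symmetry $y'\mapsto-y'$ and what remains is $o(r^{\,n+2})$ — here one uses that a minimal $S$ is automatically smooth, indeed real-analytic — so $V^{+}_r-V^{-}_r=o(r^{\,n+2})$ and $\widehat{\Delta}f_S(x_0)=0$. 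Hence $\widehat{\Delta}f_S\equiv0$ on $\Omega$ if and only if $H_S\equiv0$ on $S$, i.e.\ if and only if $S$ is minimal. I expect the main obstacle to be the error bookkeeping in the middle step: one must check that the truncation of $\varphi$ to the ball and the Taylor remainder together stay strictly below the scale $r^{\,n+2}$ at which $\widehat{\Delta}$ reads off information (in particular that the equatorial layer $|y'|\approx r$ is harmless), and — to reconcile the bare $C^{1}$ hypothesis with the conclusion — that generalized harmonicity of $f_S$, which forces $V^{+}_r-V^{-}_r=o(r^{\,n+2})$ at every point of $S$, already upgrades $S$ to a smooth minimal hypersurface through the interior regularity theory for the minimal surface equation. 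All of this is routine once $S$ is known to be $C^{2}$ (or $C^{2,\alpha}$), and it is precisely where the substance of the ``only if'' direction sits.
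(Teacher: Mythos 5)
Your reduction of $\widehat{\Delta}f_S(x_0)$ to the asymptotics of $V^+_r-V^-_r$ and the expansion $V^+_r-V^-_r=-c_nH_S(x_0)r^{n+1}+O(r^{n+3})$ are exactly the content of the paper's Lemma 2, and your treatment of the direction ``$S$ minimal $\Rightarrow f_S$ generalized harmonic'' (first upgrading a $C^1$ minimal hypersurface to a smooth one, then applying the expansion) matches the paper. The gap is in the converse, and you have put your finger on it without closing it: your dichotomy argument presupposes that $\varphi$ is twice differentiable at $x_0$, since otherwise $H_S(x_0)$ is not defined and the second-order Taylor expansion of $\varphi$ is unavailable. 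Under the stated hypothesis $S$ is only $C^1$, so the assertion ``if $H_S(x_0)\neq 0$ the limit blows up'' has no meaning yet, and ``interior regularity theory for the minimal surface equation'' cannot be invoked because you have not shown that $\varphi$ solves that equation in any sense. Declaring the upgrade from $C^1$ to $C^2$ ``routine'' is not a proof; it is the entire substance of the only-if direction.

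The paper closes this gap with a viscosity-solution argument that your write-up is missing. Given a smooth paraboloid $P$ touching $S$ from below at a point $x$, the ordering of the subgraphs gives the one-sided comparison $\fint_{B^{(n)}_r(x)}f_P\geq\fint_{B^{(n)}_r(x)}f_S$; applying the volume expansion to the smooth hypersurface $P$ (where it is legitimate) and using the hypothesis $\widehat{\Delta}f_S(x)=0$ yields $H_P(x)\leq 0$, and symmetrically $H_P(x)\geq 0$ for paraboloids touching from above. Hence $\varphi$ is a viscosity solution of the minimal surface equation, which is uniformly elliptic on the graph precisely because $S$ is $C^1$ (so $\nabla\varphi$ is locally bounded), and the regularity theory for viscosity solutions of uniformly elliptic equations then gives the smoothness of $\varphi$; only after that does your Taylor computation apply and show $H_S\equiv 0$. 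This comparison-with-tangent-paraboloids step is the missing idea you need to insert between ``$\widehat{\Delta}f_S\equiv 0$'' and ``$S$ is $C^2$''.
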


\vspace{0.5cm}

In the proof we will make use of the modern notion of viscosity solution of uniformly elliptic equations. Namely, we will show that if $\widehat{\Delta} f_S=0$, then, locally, $S$ will be given as the graph of a viscosity solution of the minimal surface equation and, therefore, it has to be smooth. We shall use also several well-known properties of minimal surfaces and elliptic equations for which \cite{caffarelli1993elementary}, \cite{giusti1984minimal} and \cite{roberts1995fully} are appropriate references.

\section*{A basic calculus lemma}

Let $S$ be a smooth ($C^4$) hypersurface in $\R^n$ with unit normal vector field $\nu(x)$. Given $x\in S$ and $r>0$, small enough, the ball $B^{(n)}_r(x)$ is separated by $S$ in two connected components, $S^+_r(x)$, $S^-_r(x)$, where $S^+_r(x)$ (respectively $S^-_r(x)$) consists of the points inside $B^{(n)}_r(x)$ which are placed above (respect. below) $S$ in the given normal direction.

\vspace{0.5cm}

Then we have:
\begin{lemma} As $r\to 0^+$,
$$
\text{vol}(S^+_r(x))-\text{vol}(S^-_r(x)) = - c_n H_S(x) \cdot r^{n+1}+ O(r^{n+3}).
$$
Here $H_S(x)$ denotes the mean curvature of $S$ at the point $x$ and $c_n>0$ is a universal constant that only depends on the dimension $n$.
\end{lemma}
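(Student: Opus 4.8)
The plan is to pass to coordinates adapted to $S$ at the point $x$, to reduce $\operatorname{vol}(S^+_r(x))-\operatorname{vol}(S^-_r(x))$ to a one–dimensional integral by Fubini, and to finish with a Taylor expansion. First I would translate so that $x=0$ and rotate so that $\nu(0)=e_n$ and $T_0S=\{x_n=0\}$. Writing $y=(x',x_n)$ with $x'\in\R^{n-1}$, the $C^4$ regularity of $S$ together with the implicit function theorem gives, for all sufficiently small $r$, a $C^4$ function $\varphi$ defined near $0'\in\R^{n-1}$ with $\varphi(0')=0$ and $\nabla\varphi(0')=0$, such that $S\cap B^{(n)}_r(0)$ is the graph $\{x_n=\varphi(x')\}$ and $B^{(n)}_r(0)\setminus S$ splits into the piece above the graph, $\{x_n>\varphi(x')\}$, and the piece below it, $\{x_n<\varphi(x')\}$. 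In these coordinates $\nabla\varphi(0')=0$ forces $H_S(0)=\operatorname{tr}(D^2\varphi(0'))$, with the sign convention for $H_S$ fixed by the choice of $\nu$.

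Next I would slice $B^{(n)}_r(0)$ by the vertical fibers $\{x'\}\times\R$, $|x'|<r$. Such a fiber meets the ball in a segment of half–length $h(x'):=\sqrt{r^2-|x'|^2}$. If $|\varphi(x')|<h(x')$ the segment is cut by $S$ and contributes $(h(x')-\varphi(x'))-(h(x')+\varphi(x'))=-2\varphi(x')$ to $\operatorname{vol}(S^+_r(0))-\operatorname{vol}(S^-_r(0))$; if $|\varphi(x')|\ge h(x')$ the whole segment lies on one side of $S$ and contributes $\pm 2h(x')$ instead. Since $\varphi(0')=0$ and $\nabla\varphi(0')=0$, we have $|\varphi(x')|\le C|x'|^2\le Cr^2$ for $|x'|<r$; hence the exceptional set $\{\,|x'|<r:|\varphi(x')|\ge h(x')\,\}$ is contained in the annulus $\{\,r\sqrt{1-C^2r^2}\le|x'|<r\,\}$, of $(n-1)$–volume $O(r^{n+1})$, on which the discrepancy between $-2\varphi(x')$ and the true fiber contribution is $O(r^2)$. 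Fubini then gives
$$
\operatorname{vol}(S^+_r(0))-\operatorname{vol}(S^-_r(0))=-2\int_{B^{(n-1)}_r(0')}\varphi(x')\,\mathrm{d}x'+O(r^{n+3}).
$$

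To evaluate the integral I would Taylor–expand $\varphi(x')=\tfrac12\,D^2\varphi(0')[x',x']+P_3(x')+R_4(x')$, where $P_3$ is homogeneous of degree $3$ and $|R_4(x')|\le C'|x'|^4$ — this last bound is the one place where the $C^4$ hypothesis is genuinely needed, since mere $C^3$ would give only $o(|x'|^3)$. Integrating over the ball $B^{(n-1)}_r(0')$, which is symmetric under $x'\mapsto -x'$, annihilates $P_3$ and all odd–order terms, contributes $O(r^{n+3})$ from $R_4$, and leaves $\tfrac12\operatorname{tr}(D^2\varphi(0'))\cdot\tfrac{1}{n-1}\int_{B^{(n-1)}_r(0')}|x'|^2\,\mathrm{d}x'=\tfrac{|\mathbb{S}^{n-2}|}{2(n-1)(n+1)}\,\operatorname{tr}(D^2\varphi(0'))\,r^{n+1}$. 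Substituting back and recalling $H_S(0)=\operatorname{tr}(D^2\varphi(0'))$ yields the claimed expansion with $c_n=\frac{|\mathbb{S}^{n-2}|}{(n-1)(n+1)}>0$; note in particular that no $r^{n+2}$ term ever appears, so when $H_S(x)=0$ the difference is genuinely $O(r^{n+3})$.

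The only step requiring real care is the estimate of the exceptional/boundary contribution near $\partial B^{(n)}_r(0)$: one must check that replacing the true slice contribution by $-2\varphi(x')$ there costs only $O(r^{n+3})$, i.e. produces no spurious $r^{n+1}$ or $r^{n+2}$ term, which is exactly the ``$O(r^2)$ on a set of $(n-1)$–volume $O(r^{n+1})$'' bound recorded above. Everything else is routine symmetry and Taylor expansion.
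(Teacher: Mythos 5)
Your proof is correct and follows essentially the same route as the paper's: graph coordinates over the tangent plane at $x$, reduction of the volume difference to $-2\int_{B^{(n-1)}_r}\varphi$ with an $O(r^{n+3})$ boundary correction (you organize this as a single exceptional annulus via Fubini, where the paper uses the strip $\{|x_n|\le c_1r^2\}$ together with a shrunken projected ball $B^{(n-1)}_{r-c_2r^3}$), followed by a fourth-order Taylor expansion exploiting odd symmetry. The only cosmetic difference is your normalization $H_S(0)=\operatorname{tr}\bigl(D^2\varphi(0)\bigr)$ versus the paper's $H_S(0)=\frac{1}{n-1}\Delta\varphi(0)$, which merely rescales the unspecified positive constant $c_n$.
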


\begin{proof}
Without loss of generality we can assume that $x=0$ is the origin of a coordinate system such that the tangent space of $S$ at $x$ is horizontal, i.e., the normal vector is $\nu(0)=(0,\cdots,0,1)$. Hence, near $x=0$, $S$ is the graph of a smooth ($C^4)$) function $x_n=\varphi(x_1,\cdots,x_{n-1})$ satisfying:
\begin{enumerate}
\item $\varphi(0)=0$.
\item $\nabla \varphi(0)=0$.
\end{enumerate}

\vspace{0.5cm}
Then, inside the cylinder $B^{(n-1)}_r(0)\times \R$, $r$ small enough, we have the inclusion
$$
S\cap \left(B^{(n-1)}_r(0)\times \R\right)  \subset \{x\in \R^n: |x_n|\leq c_1 r^2\}
$$
for a positive constant $c_1$ depending upon the size of the second derivatives of $\varphi$. 

\vspace{0.5cm}

An elementary computation shows that the vertical projection of $S\cap B^{(n)}_r(0)$ onto $B^{(n-1)}_r(0)$ must contain the ball
$$
B^{(n-1)}_{r-c_2r^3}(0)
$$
for a fixed constant $c_2$.

\vspace{0.5cm}

Let
$$
D^+_r=B^{(n)}_r(0)\cap \Omega^+ \cap \{|x_n|\leq c_1 r^2\},
$$
$$
D^-_r=B^{(n)}_r(0)\cap \Omega^- \cap \{|x_n|\leq c_1 r^2\}.
$$

\vspace{0.5cm}

Then, since $S$ is contained in the strip, by symmetry we get the first equality
\begin{align*}
\text{vol}(S^+_r(0))-\text{vol}(S^-_r(0)) &=\text{vol}(D^+_r)-\text{vol}(D^-_r)\\
 &=\int_{B^{(n-1)}_{r-c_2r^3}(0)} \left(c_1r^2-\varphi(x)\right) \mathrm{d}x - \int_{B^{(n-1)}_{r-c_2r^3}(0)} \left(c_1r^2+\varphi(x)\right) \mathrm{d}x + I
\end{align*}
where $I$ is the correction of restricting the domain of integration.

\vspace{0.5cm}

A direct computation of the term $I$ shows that
$$
|I| \lesssim r^2\cdot (r^{n-1}-(r-c_2r^3)^{n-1}) \lesssim r^{n+3}= O(r^{n+3}).
$$

\vspace{0.5cm}

Hence
$$
\text{vol}(S^+_r(0))-\text{vol}(S^-_r(0)) = -2 \int_{B^{(n-1)}_{r-c_2r^3}(0)} \varphi(x) \mathrm{d}x + O(r^{n+3}).
$$

\vspace{0.5cm}

With another computation with respect to the volume of the cylinders, we obtain that
$$
\left|\int_{B^{(n-1)}_{r}(0)}\varphi(x) \mathrm{d}x - \int_{B^{(n-1)}_{r-c_2r^3}(0)} \varphi(x) \mathrm{d}x  \right| \lesssim r^{n+3} = O(r^{n+3}).
$$ 

\vspace{0.5cm}

Then Taylor's expansion yields
$$
\fint_{B^{(n-1)}_r(0)} \varphi(y) \mathrm{d}y = \frac{1}{2(n+1)} \Delta \varphi (0) \cdot r^2 + O(r^4).
$$

\vspace{0.5cm}

This allows us to finish the proof of the lemma, because we know that
$$
H_S (x',\varphi(x')) = \frac{1}{n-1} \text{div}\left( \frac{\nabla \varphi}{\sqrt{1+|\nabla \varphi|^2}}  \right)(x')
$$

\vspace{0.5cm}

And since $\nabla \varphi(0)=0$, we have
$$
H_S(0)= \frac{1}{n-1}\Delta \varphi (0).
$$
\end{proof}

\section*{Proof of Theorem 1}

First, without loss of generality, one can assume that $\alpha_+= +1$ and $\alpha_-=-1$. Next, let us observe that one of the two implications of the theorem follows immediately: namely if $S$ is minimal and $C^1$ then, by the classical theorem of de Giorgi-Nash, $S$ has to be smooth and we can apply Lemma 2 to observe that at any point $x\in S$ we have:
\begin{align*}
\widehat{\Delta} f(x) &= \lim_{r\to 0^+} \frac{2(n+2)}{r^2} \fint_{B_r^{(n)}(x)}(f(y)-0) \mathrm{d}y \\
&=\lim_{r\to 0^+} \frac{2(n+2)}{|B_r^{(n)}(x)|r^2} \left[\text{vol}(S^+_r(x))-\text{vol}(S^-_r(x)) \right] \\
&=0,
\end{align*}
because of the minimality condition $H_S(x)=0$.

\vspace{0.5cm}

Note, that a similar argument with Lemma 2 also works to prove that $f_S$ being generalized harmonic implies that $S$ is minimal. Therefore, to finish the proof we just need to prove the regularity ($C^4$) of $S$.

\vspace{0.5cm}

To continue the proof let us recall now that a real continuous function $F$ defined on \\ $\Omega\times \R\times \R^n \times M^n$, where $M^n$ denotes the vector space of $n$ x $n$ symmetric matrices, yields an elliptic equation $F(x,u,Du, D^2u)=0$ if 
$$
F(x,u,\eta, \delta)\leq F(x,u,\eta, \delta + \sigma)
$$ 
for all $(x,u,\eta, \delta)\in \Omega\times \R\times \R^n \times M^n$ and $\sigma\in M^n$ non-negative.

\vspace{0.5cm}

The elliptic equation is called uniformly elliptic if there exist positive constant $\lambda, \Lambda$ satisfying the estimate:
$$
0<\lambda \Vert \sigma \Vert < F(x,u,\eta, \delta+\sigma) - F(x,u,\eta, \delta) \leq \Lambda \Vert \sigma \Vert
$$
where $\Vert \sigma \Vert$ denotes the $(L^2,L^2)$-norm (i.e. $\Vert \sigma \Vert= \sup_{\Vert x\Vert =1} \Vert \sigma x \Vert = $ maximum of the eigenvalues of $\sigma$).

\vspace{0.5cm}

\underline{Definition}: A continuous function $u$ is called a viscosity subsolution (respectively supersolution) of $F(x,u,Du, D^2u)=0$ if, for any quadratic polynomial $\Psi\in C^2(\Omega)$ and local maximum (respectively local minimum) $x_0$ of $u-\Psi$ we have 
$$
F(x_0,u(x_0),D\Psi(x_0), D^2 \Psi(x_0))\geq 0 \hspace{1cm} (\text{respect. } \leq 0).
$$ 
Finally $u$ is a viscosity solution if it is both a viscosity subsolution and supersolution.

\vspace{0.5cm}

Reference \cite{roberts1995fully} contains the result about regularity (Corollary 5.7) and uniqueness (Corollary 5.4) of viscosity solutions of uniformly elliptic equations, which we shall invoke to conclude the proof.

\vspace{0.5cm}

More precisely, under the hypothesis that $\widehat{\Delta} f\equiv 0$, let $P$ be a paraboloid tangent from below to our hypersurface $S=\{x_n=\varphi(x_1,\cdots,x_{n-1})$ at a point $x=(x_1, \cdots, x_{n-1}, x_n)\in S \cap P$.

\vspace{0.5cm}

Let $f_P$ be its corresponding function defined in the introduction, we have the inequality
$$
\fint_{B^{(n)}_r(x)} f_P \geq \fint_{B^{(n)}_r(x)} f_S
$$

\vspace{0.5cm}

On the other hand, lemma 2 applied to the hypersurface $P$ yields
$$
\fint_{B^{(n)}_r(x)} f_P = -c_n H_P(x) \cdot r^{n+1} + O(r^{n+2})
$$
which together with the hypothesis
$$
\lim_{r\to 0^+} \frac{1}{r^2}\fint_{B^{(n)}_r(x)} f_S = 0
$$
implies that $H_P(x)\leq 0$.

\vspace{0.5cm}

Similarly if $P$ is now a paraboloid tangent to $S$ from above at the point $x$, then we must have $H_P(x)\geq 0$. Therefore $\varphi$ is a viscosity solution of the equation 
$$
\text{div}\left( \frac{\nabla \varphi}{\sqrt{1+|\nabla \varphi|^2}}\right)=0
$$
whose uniform ellipticity is ensured by the hypothesis that $S$ is of class $C^1$.

\vspace{0.5cm}

The regularity theory of such solutions (\cite{roberts1995fully}) allows us to conclude the smoothness of $\varphi$ (and $S$) and, therefore, its minimality.

\bibliography{referencesMinSurf}
\bibliographystyle{ieeetr}

\end{document}